\documentclass[a4paper,12pt,reqno]{amsart}

\textwidth 6.6truein
\textheight 8.5truein
\oddsidemargin 0pt
\evensidemargin 0pt
\usepackage{amsfonts,amssymb,amscd,amsmath,amsxtra,latexsym,amsbsy,enumerate,a4wide,verbatim}
\usepackage{mathrsfs}

\pagestyle{myheadings}

\newtheorem{thm}{Theorem}[section]

\newtheorem{defn}[thm]{Definition}
\newtheorem{defprop}[thm]{Definition-Proposition}
\theoremstyle{definition}
\newtheorem{remark}[thm]{Remark}

\numberwithin{equation}{section}

\def\al{\alpha}

\def\ga{\gamma}
\def\de{\delta}

\def\si{\sigma}
\def\vp{\varphi}

\def\io{\mathrm{id}}

\def\De{\Delta}

\def\Up{\Upsilon}
\def\Z{\mathbb{Z}}

\def\C{\mathbb{C}}
\def\N{\mathbb{N}}

\def\cG{\mathcal G}

\def\scL{\mathscr L}

\newcommand{\rphis}[5]{\,_{#1}\vp_{#2} \left( \genfrac{.}{.}{0pt}{}{#3}{#4}
\ ;#5 \right)}


\title[$q$-Hankel transform]{A $q$-Hankel transform associated to the quantum linking groupoid for the
quantum $SU(2)$ and $E(2)$ groups}
\author{Kenny De Commer and Erik Koelink}
\date{\today}
\address{Department of Mathematics, University of Cergy-Pontoise,
UMR CNRS 8088, F-95000 Cergy-Pontoise, France}
\email{Kenny.De-Commer@u-cergy.fr}
\address{Radboud Universiteit Nijmegen, IMAPP, FNWI, Heyendaalseweg 135, 6525 AJ Nijmegen,
the Netherlands}
\email{e.koelink@math.ru.nl}

\begin{document}

\begin{abstract} A $q$-analogue of Erd\'elyi's formula for the Hankel transform of the product of
Laguerre polynomials is derived using the quantum linking groupoid between the quantum
$SU(2)$ and $E(2)$ groups. The kernel of the $q$-Hankel transform is given by the ${}_1\vp_1$-$q$-Bessel 
function, and then the transform of a product of two Wall polynomials times a $q$-exponential is calculated
as a product of two Wall polynomials times a $q$-exponential.
\end{abstract}

\maketitle


\section{Introduction}\label{sec:intro}

In 1938 Arthur Erd\'elyi \cite{Erde} proved the following formula for the Hankel transform of the product
of two Laguerre polynomials
\begin{equation}\label{eq:Erdelyi}
\begin{split}
\int_{0}^{\infty} x^{\nu} e^{-x^2} L_{m}^{(\nu - \sigma)}(x^2)L_{n}^{(\sigma)}(x^2) J_{\nu}(2xy) x\, d x
= \\
\frac{(-1)^{m+n}}{2} y^{\nu} e^{-y^2}L_m^{(\sigma -m +n)}(y^2) L_{n}^{(\nu -\sigma +m -n)}(y^2)
\end{split}
\end{equation} 
using integral representations for the Laguerre polynomials in case $n=m$, see \cite{KolbS} for a proof and historic overview. 
In \eqref{eq:Erdelyi} we require $n,m\in\N$, $\Re \nu>-1$, $y>0$, $\si\in\C$, and we use the standard notation for Laguerre polynomials and Bessel functions as
in e.g. \cite{Isma}. 
There are many relations between Laguerre polynomials and the Hankel transform, the most well-known being that the Laguerre polynomials 
arise as the eigenfunctions of the Hankel transform. 
For some of these identities there is a group theoretic interpretation, but as far as we know this is not the case for Erd\'elyi's identity \eqref{eq:Erdelyi}.

We present a $q$-analogue of Erd\'elyi's identity, which surprisingly \emph{does} have a quantum group theoretic derivation. 
The essential ingredient is the use of the linking quantum groupoid between the quantum group analogues of $SU(2)$ and the double cover of $E(2)$, 
the group of plane motions. This linking quantum groupoid is studied extensively in 
\cite{DeCo-CMP}, \cite{DeCo-AiM}, and we recall the necessary results in Section \ref{sec:qlinkinggroupoid}.

In some sense the result is reminiscent of the quantum group theoretic derivation of the addition formula for the little $q$-Legendre polynomials 
by Koornwinder \cite{Koor-SIAM1991} and Graf's addition formula for the ${}_1\vp_1$ $q$-Bessel functions \cite[\S 6]{Koel-DMJ}, which in turn is motivated by
\cite{Koor-SIAM1991}. The linking quantum groupoid allows us to connect these two developments, and we obtain the $q$-analogue 
of \eqref{eq:Erdelyi} from this connection.

The contents of the paper are as follows. In Sections \ref{sec:preliminaries-QSF} and \ref{sec:preliminaries-QGPS} we recall the necessary preliminaries on special functions and quantum groups, and we recall as well the definition of the standard Podle\'s sphere. In Section \ref{sec:qlinkinggroupoid} we present the above mentioned quantum linking groupoid and its comultiplication, which is employed in Section \ref{sec:qErdelyiformula} to derive
a $q$-analogue of \eqref{eq:Erdelyi}. Having the result at hand we can then extend various of the parameters to a more general domain.
The $q$-Hankel transform is as in \cite{KoorS}, and taking the inverse gives the same formula.
In Remark \ref{rmk:thmqErdelyi}(ii) we sketch an analytic, though verificational, proof.

Koornwinder's addition formula \cite{Koor-SIAM1991} has been generalized to Askey-Wilson polynomials in \cite{Koel-T} using Koornwinder's \cite{Koor-SIAM1993} twisted 
 primitive elements in quantum groups, and we expect that Theorem \ref{thm:qErdelyi} can be extended in a similar way to the level of the Askey-Wilson Hankel transform, see  \cite[Fig.~2.1]{KoelS}.
From \cite{DeCo-CMP} we know that there is also a quantum linking groupoid between the quantum group analogues of $SU(2)$ and 
the normalizer of $SU(1,1)$ in $SL(2,\C)$, the construction of which involves non-standard Podle\'{s} spheres. It might be possible to obtain a similar result in this case even though the 
unitary operator implementing the comultiplication becomes more involved. 


\section{Preliminaries: $q$-special functions}\label{sec:preliminaries-QSF}

In this section we recall the $q$-special functions needed, and we fix the notation.

\subsection{Basic hypergeometric series}\label{ssec:BHS}
We follow the notation for basic hypergeometric series as in Gasper and Rahman \cite{GaspR},
see also e.g. \cite{Isma}, \cite{KoekS}. Explicitly, we assume $0<q<1$ and define
the $q$-shifted factorials for $a\in\C$, $k\in\N$ by
\begin{equation*}
(a;q)_k \, = \, \prod_{i=0}^{k-1}(1-aq^i), \quad (a;q)_\infty \, = \, \lim_{k\to\infty} (a;q)_k,
\quad (a_1,\cdots,a_r;q)_k = \prod_{i=1}^r (a_i;q)_k
\end{equation*}
also allowing $k=\infty$ in the last definition.
Then the basic (or $q$-)hypergeometric series is
\begin{equation}\label{def:rphisbhs}
\rphis{r}{s}{a_1,\cdots,a_r}{b_1,\cdots b_s}{q, z} \, = \,
\sum_{k=0}^\infty \frac{(a_1,\cdots,a_r;q)_k}{(b_1,\cdots,b_s;q)_k} \frac{z^k}{(q;q)_k} \left( (-1)^k q^{\frac12 k(k-1)}\right)^{1+s-r}.
\end{equation}
We assume that $b_i\not\in q^{-\N}$ for all $i$.
The series terminates if $a_i\in q^{-\N}$ for some $i$. In general, the radius of convergence is
\begin{itemize}\item[$\bullet$] $1$ in
case $r=s+1$,
\item[$\bullet$] $\infty$ if $1+s>r$, and
\item[$\bullet$] $0$ if $1+s<r$.
\end{itemize}
 Note however that
\[
(b_1;q)_\infty \rphis{r}{s}{a_1,\cdots,a_r}{b_1,\cdots b_s}{q, z}
\]
is analytic in $b_1$, and there is no need to exclude $q^{-\N}$ for $b_1$. In case $b_1=q^{1-n}$, $n\in\N$,
the summation starts at $k=n$.

\subsection{Wall polynomials}
The Wall polynomials are defined as a subclass of the little $q$-Jacobi polynomials,
see \cite{GaspR},\cite{KoekS}. Explicitly, we define the Wall polynomials for $0<a<q^{-1}$ as
\begin{equation}\label{eq:defWallpols}
p_n(x;a;q) \, = \, \rphis{2}{1}{q^{-n}, 0}{aq}{q, qx} \, = \,
\frac{(-1)^n q^{\frac12n(n+1)}(ax)^n}{(aq;q)_n}\rphis{3}{2}{q^{-n},q^{-n}/a,1/x}{0,0}{q,q}
\end{equation}
see \cite[(2.8)]{Koor-SIAM1991}.
The Wall polynomials are $q$-analogues of the Laguerre polynomials in view
of
\[
\lim_{q\uparrow 1} p_n((1-q)x;q^\al;q) = \frac{L_n^{(\al)}(x)}{L_n^{(\al)}(0)},
\]
where the notation for Laguerre polynomials $L_n^{(\al)}(x)$ is the standard notation as in e.g. 
\cite[\S 4.6]{Isma}, \cite[\S 1.11]{KoekS}. 
The orthogonality relations and the dual orthogonality relations for the Wall polynomials are
\begin{equation}\label{eq:orthoanddualorthoWallpols}
\begin{split}
\sum_{k=0}^\infty \frac{(aq)^k}{(q;q)_k} p_n(q^k;a;q) p_m(q^k;a;q) \, = \, \de_{nm} \frac{(aq)^n (q;q)_n}{(aq;q)_n(aq;q)_\infty}, \\
\sum_{n=0}^\infty \frac{(aq;q)_n}{(aq)^n(q;q)_n} p_n(q^k;a;q) p_n(q^l;a;q) \, = \, \de_{kl}
\frac{(aq)^{-k} (q;q)_k}{(aq;q)_\infty},
\end{split}
\end{equation}
where the dual orthogonality relations correspond to the orthogonality relations for the
Al-Salam--Carlitz II polynomials,
see \cite{KoekS}, \cite[\S 2]{Koor-SIAM1991}, \cite[Prop.~3.3]{Lanc}.

\subsection{${}_1\vp_1$ $q$-Bessel functions}

For the ${}_1\vp_1$ $q$-Bessel functions (also known as Jackson's third $q$-Bessel function or as the Hahn-Exton $q$-Bessel function) we follow Koornwinder and Swarttouw \cite{KoorS}. Define the ${}_1\vp_1$ $q$-Bessel function of order $\nu$
\begin{equation}\label{eq:def1phi1qBesselfunction}
J_\nu (x;q) = x^\nu \frac{(q^{\nu+1};q)_\infty}{(q;q)_\infty} \rphis{1}{1}{0}{q^{\nu+1}}{q,qx^2}.
\end{equation}
Note that this is well-defined for $\nu=n\in\Z$, see \S \ref{ssec:BHS}, and in this case we have
$J_{-n}(w;q^2)= (-q)^n J_n(wq^n;q^2)$. We also have the symmetry
$J_\nu(q^\al;q^2) = J_\al(q^\nu;q^2)$, see \cite[Prop.~2.1]{KoorS}.
Moreover,
we have $\lim_{q\uparrow 1} J_\nu((1-q)x;q^2) = J_\nu(x)$, where $J_\nu(\cdot)$ is the Bessel function,
see \cite[\S 3]{KoorS}.
The ${}_1\vp_1$ $q$-Bessel functions can be obtained from the little $q$-Jacobi polynomials, see \cite[\S 3]{KoorS}, and
the following orthogonality relations hold:
\begin{equation}\label{eq:orthrellittleqBessel}
\sum_{k\in \Z}\, q^{k+n} J_{k+n}(q^l;q^2)\, q^{k+m} J_{k+m}(q^l;q^2)\, = \, \de_{nm}, \qquad n,m,l\in\Z.
\end{equation}
The orthogonality relations can also be rewritten as a $q$-Hankel transform pair:
\begin{equation}\label{eq:qHankeltransformpair}
g(q^n)\, = \, \sum_{k=-\infty}^\infty q^{2k}\, J_\nu(q^{k+n};q^2) \, f(q^k) \quad \Leftrightarrow
f(q^k)\, = \, \sum_{n=-\infty}^\infty q^{2n}\, J_\nu(q^{k+n};q^2) \, g(q^n)
\end{equation}
for $f$ and $g$ square integrable on $q^\Z$ with respect to the counting measure, $n,k\in\Z$, see \cite[\S 3]{KoorS}.
Note that \eqref{eq:qHankeltransformpair} can also be written using Jackson's $q$-integral,
\begin{equation}\label{eq:defJacksonqintegral}
\int_0^\infty f(x)\, d_qx \, = \, (1-q) \sum_{k=-\infty}^\infty f(q^k) q^k,
\end{equation}
for any function so that the sum converges.


\section{Preliminaries: quantum groups and Podle\'s sphere}\label{sec:preliminaries-QGPS}

In this section we introduce the quantum group analogues of $SU(2)$ and the double cover $\widetilde{E}(2)$ of the group of plane motions $E(2)$, as well as the standard Podle\'s sphere, on the level of von Neumann algebras. The main tools are the comultiplication and the corresponding action on the Podle\'s sphere. See \cite{DeCo-CMP}, \cite{DeCo-AiM} for precise information and references.

\subsection{The quantum $SU(2)$ group}\label{ssec:prelquantumSU2}

The quantum $SU(2)$ group is one of the earliest and most basic examples of a quantum group. It has
been introduced by Woronowicz \cite{Woro-RIMS1987}, see also \cite[\S 6.2]{Timm} and references.
The harmonic analysis and the relation with $q$-special functions on the quantum $SU(2)$ group is
of great interest.

\begin{defn}\label{def:quantumSU2}
Let $0<q<1$. The Hopf $\ast$-algebra $(\mathrm{Pol}(SU_q(2)),\De_+)$ is the universal unital $\ast$-algebra
generated by elements $\al$, $\ga$ which make the matrix $u = \begin{pmatrix} \al & -q\ga^\ast\\ \ga & \al^\ast\end{pmatrix}$
a unitary corepresentation.
\end{defn}

Definition \ref{def:quantumSU2} is a compact formulation due to Woronowicz's result \cite[Thm.~1.4]{Woro-RIMS1987}. In particular, 
the Hopf $\ast$-algebra $\mathrm{Pol}(SU_q(2))$ is generated by $\al$ and $\ga$ subject to the relations
\begin{equation}\label{eq:relationsinqSU2}
\al\ga=q\ga\al, \quad \al\ga^\ast=q\ga^\ast\al, \quad \ga^\ast\ga=\ga\ga^\ast, \quad 
\al^\ast\al+\ga^\ast\ga=1=\al\al^\ast+q^2 \ga^\ast\ga
\end{equation}
and with comultiplication $\De_+\colon \mathrm{Pol}(SU_q(2)) \to \mathrm{Pol}(SU_q(2))\otimes \mathrm{Pol}(SU_q(2))$ given by
\begin{equation}\label{eq:comultinqSU2}
\De_+(\al) = \al\otimes\al-q \ga^\ast \otimes \ga, \quad 
\De_+(\ga) = \ga\otimes\al+\al^\ast \otimes \ga.
\end{equation}

Associated to the Hopf $\ast$-algebra $(\mathrm{Pol}(SU_q(2)),\De_+)$ there is a von Neumann bialgebra
$(\scL^{\infty}(SU_q(2)),\De_+)$, which we now describe. We use the notation
$\scL(\Z)$ to denote the group von Neumann algebra of $\Z$,
i.e. the von Neumann algebra generated by the bilateral unitary shift operator $S\colon e_n\mapsto e_{n+1}$ acting
on $\ell^2(\Z)$, where the $e_n$ denote the standard orthonormal basis. In general, we will denote $e_{m,n,\ldots}$ for $e_m\otimes e_n\otimes \ldots$ 
as an element in a tensor product of $\ell^2$-spaces. 

The explicit implementation of the comultiplication $\De_+$ in the following Definition-Proposition \ref{defprop:SUq2asvNbialg} is  due
to Koornwinder \cite{Koor-SIAM1991}, Lance \cite{Lanc}. The formulation is taken from
\cite[App.~A]{DeCo-CMP}.

\begin{defprop}\label{defprop:SUq2asvNbialg}
The von Neumann bialgebra $(\scL^{\infty}(SU_q(2)),\De_+)$ has as its underlying  von Neumann algebra
$\scL^{\infty}(SU_q(2)) = B(\ell^2(\N))\bar{\otimes} \scL(\Z)$, where $\bar{\otimes}$ denotes the spatial tensor product of von Neumann algebras.
The comultiplication is given by  $\De_+(x) = W_+^\ast(1\otimes x)W_+$, where
$W_+$ is a unitary map defined by 
\begin{gather*}
W_+\colon (\ell^2(\N)\otimes \ell^2(\Z))\otimes (\ell^2(\N)\otimes \ell^2(\Z)) \rightarrow
(\ell^2(\Z)\otimes \ell^2(\Z))\otimes (\ell^2(\N)\otimes \ell^2(\Z)) \\
\xi_{r,s,p,t}^+\mapsto e_{r,s,p,t}, \qquad p\in\N, r,s,t\in\Z,
\end{gather*}
where $\{ \xi_{r,s,p,t}^+\mid p\in\N, r,s,t\in\Z\}$ is an orthonormal basis of $(\ell^2(\N)\otimes \ell^2(\Z))\otimes (\ell^2(\N)\otimes \ell^2(\Z))$
defined by 
\begin{gather*}
\xi_{r,s,p,t}^+ = \underset{v-w=t}{\sum_{v,w \in \N}} P^{+}(p,v,w) e_{v,r+p-w,w,s-p+v} \\
 P^{+}(p,v,w) =  (-q)^{p-w} q^{(p-w)(v-w)}
 \sqrt{\frac{(q^{2p+2},q^{2w+2};q^2)_\infty}{(q^{2v+2};q^2)_\infty}}
 \frac{(q^{2v-2w+2};q^2)_\infty}{(q^{2};q^2)_\infty}
 p_w(q^{2p};q^{2v-2w};q^2)
\end{gather*}
for $p,v,w\in \N$.
\end{defprop}

\begin{remark}\label{rmk1:defSUq2asvNbialg}
(i)  The fact that $\{\xi_{r,s,p,t}^+\}_{p\in\N, r,s,t\in\Z}$ is an orthonormal basis for the space
$(\ell^2(\N)\otimes \ell^2(\Z))\otimes (\ell^2(\N)\otimes \ell^2(\Z))$, i.e.~ that $W_+$ is a unitary operator, is equivalent to 
the orthogonality and dual orthogonality relations \eqref{eq:orthoanddualorthoWallpols} for the Wall polynomials, which can be stated as
\begin{equation}\label{eq:orthrelP^+}
\begin{split}
&\sum_{p=0}^\infty P^+(p,v,w)\, P^+(p,v',w') = \de_{v,v'} \qquad \text{if}\ v-w=v'-w', \\
&\underset{v-w=t}{\sum_{v,w \in \N}} P^{+}(p,v,w)\, P^+(p',v,w)\, =\, \de_{p,p'}.
\end{split}
\end{equation}
\\
(ii) 
Using the explicit expression \eqref{eq:defWallpols}
for the Wall polynomial as a ${}_3\vp_2$-function we find that $P^+(p,v,w)(-q)^{-p}$ is symmetric in
$p$, $v$ and $w$. 
The expression for $P^{+}(p,v,w)$ coincides with the notation as in \cite[Def.~0.4, App.~A]{DeCo-CMP}.
The identification with \cite[(2.5)]{Koor-SIAM1991} is
$P^{+}(p,v,w)=(-1)^p P_v(q^{2p};q^{2(w-v)}\mid q^2)$. \\
(iii) The embedding of $\mathrm{Pol}(SU_q(2))$ into $\scL^{\infty}(SU_q(2))$ is given by
\begin{equation}\label{eq:reprSUq(2)}
  \al e_{n,k} \,=\, \sqrt{1-q^{2n}}\, e_{n-1,k}, \qquad
\ga e_{n,k} \,=\, q^n\, e_{n,k+1},
 \end{equation}
with the convention $e_{-1}=0$ in $\ell^2(\N)$.
The fact that the comultiplication \eqref{eq:comultinqSU2}
agrees with Definition-Proposition \ref{defprop:SUq2asvNbialg} follows from suitable contiguous relations for the Wall polynomials, hence
the $P^+$-functions, see \cite[App.~A]{DeCo-CMP}.
(In 2004 Groenevelt (unpublished notes) obtained the same result.) 
For this the sign-difference with the notation of Koornwinder \cite{Koor-SIAM1991} is essential. \\
(iv) A straightforward calculation shows that with respect to the standard basis we have 
\[
W_+ \, e_{m,k,n,l} = \sum_{p\in \N} P^+(p,m,n)
e_{k+n-p,l-m+p,p,m-n}.
\]
\end{remark}


\subsection{The quantum $\widetilde{E}(2)$ group}\label{ssec:prelquantumE2}

\begin{defprop}\label{defprop:quantumE2}
The von Neumann bialgebra $(\scL^{\infty}(\widetilde{E}_q(2)),\De_0)$ has as its associated von Neumann algebra
$\scL^{\infty}(\widetilde{E}_q(2)) = B(\ell^2(\Z)) \bar{\otimes}\scL(\Z)$.
The comultiplication $\De_0(x) = W_0^*(1\otimes x)W_0$ for all $x\in \scL^{\infty}(\widetilde{E}_q(2))$ is defined by
the unitary map 
\begin{gather*}
W_0\colon (\ell^2(\Z)\otimes \ell^2(\Z))\otimes (\ell^2(\Z)\otimes \ell^2(\Z))\rightarrow (\ell^2(\Z)\otimes \ell^2(\Z))\otimes (\ell^2(\Z)\otimes \ell^2(\Z)) \\
\xi_{r,s,p,t}^0\mapsto e_{r,s,p,t}, 
\end{gather*}
where $\{\xi_{r,s,p,t}^0\mid r,s,p,t\in\Z\}$ is the orthonormal basis of  $(\ell^2(\Z)\otimes \ell^2(\Z))\otimes (\ell^2(\Z)\otimes \ell^2(\Z))$ 
defined by
\begin{equation*}
\xi_{r,s,p,t}^0 = \underset{v-w=t}{\sum_{v,w\in \Z}} P^{0}(p,v,w) e_{v,r+p-w,w,s-p+v}
\end{equation*}
for $p,r,s,t\in \Z$, where
$P^{0}(p,v,w)\, = \, (-q)^{p-w} \, J_{v-w}(q^{p-w};q^2)$ for $p,v,w\in\Z$.
\end{defprop}

\begin{remark} (i) 
Using the explicit expression \eqref{eq:def1phi1qBesselfunction} and the symmetries for the
${}_1\vp_1$ $q$-Bessel functions as in \cite[Prop.~2.1, (2.6)]{KoorS} we find that $P^0(p,v,w)(-q)^{-p}$ is symmetric in
$p$, $v$ and $w$.
The expression for $P^{0}(p,v,w)$ coincides with the notation as in \cite[Prop.~0.6, App.~A]{DeCo-CMP}.
From \eqref{eq:orthrellittleqBessel} and the symmetries for the ${}_1\vp_1$ $q$-Bessel functions we find
that \eqref{eq:orthrellittleqBessel} gives
\begin{equation}\label{eq:orthrelP^0}
\begin{split}
&\sum_{p=-\infty}^\infty P^0(p,v,w)\, P^0(p,v',w') = \de_{v,v'} \qquad \text{if}\ v-w=v'-w', \\
&\underset{v-w=t}{\sum_{v,w \in \Z}} P^0(p,v,w)\, P^0(p',v,w)\, =\, \de_{p,p'}.
\end{split}
\end{equation}
Note that \eqref{eq:orthrelP^0} are equivalent to $\{\xi_{r,s,p,t}^0\}_{r,p, s,t\in\Z}$ being an orthonormal basis, hence $W_0$ is a unitary operator.
See \cite[Prop.~4.2, App.~A]{DeCo-CMP} for the implementation of the comultiplication, and also
\cite{Koel-DMJ} where the implementation is implicit.

\noindent
(ii) Define
\begin{equation}\label{eq:defvinEq}
ve_{m,k} = e_{m-1,k}, \qquad v\in \scL^{\infty}(\widetilde{E}_q(2)),
\end{equation}
and let $n$ denote the unique unbounded normal operator affiliated with  $\scL^{\infty}(\widetilde{E}_q(2))$ satisfying 
\[n e_{m,k}= q^m e_{m,k+1}.\] 
Then $v$ and $n$ are the generators of the quantum group $\widetilde{E}_q(2)$ as introduced by Woronowicz \cite{Woro-LMP1991}. In particular,
\begin{equation}\label{eq:defDeltavinEq}
\De_0(v) = W_0^\ast (1\otimes v) W_0 = v\otimes v.
\end{equation}
as follows from $P^0(p,v,w)= P^0(p-1,v-1,w-1)$, see \cite[App. A]{DeCo-CMP} for more information.
\\
\noindent
(iii) Note that $\lim_{N\to \infty} P^+(p+N,v+N,w+N)= P^0(p,v,w)$, and then 
\eqref{eq:orthrelP^+} go over into \eqref{eq:orthrelP^0}, 
cf. \cite[\S3, App.~A]{KoorS}. One may consider this limit as a reflection of the contraction
procedure of the quantum $SU(2)$ group to the quantum $\tilde{E}(2)$ group, see 
\cite[\S 2]{Woro-CMP1992}. 
\end{remark}


\subsection{The standard Podle\'s sphere}\label{ssec:prelPodlessphere}

The Podle\'s spheres are a $1$-parameter family of quantum analogues of the $2$-sphere originally introduced in \cite{Podl}.
We need one particular case, see \cite{DeCo-CMP} for more details.

\begin{defn}\label{def:Podlessphere}
Let $0<q<1$. The $^*$-algebra $\mathrm{Pol}(S_{q}^2)$ is generated by elements $X$, $Y$ and $Z$
satisfying $X^\ast= Y$, $Z^\ast= Z$ and
\[
XZ = q^2ZX, \quad YZ = q^{-2}ZY, \quad XY = Z -q^2 Z^2,\quad YX = Z -q^{-2} Z^2.
\]
Then $\mathrm{Pol}(S_{q}^2)$ is the polynomial $\ast$-algebra associated to the
\emph{standard Podle\'{s} sphere} $S_q^2$. The algebra
$\mathrm{Pol}(S_{q}^2)$ carries a coaction $\Up\colon \mathrm{Pol}(S_{q}^2)\to
\mathrm{Pol}(SU_q(2))\otimes \mathrm{Pol}(S_{q}^2)$, determined by
\[
\Up \begin{pmatrix} X \\ 1-(1+q^2)Z \\ Y \end{pmatrix} =
\begin{pmatrix} \alpha^2 & -\gamma^*\alpha & -q(\gamma^*)^2 \\ (1+q^2)\alpha\gamma & 1-(1+q^2)\gamma^*\gamma & (1+q^2)\gamma^*\alpha^* \\ -q\gamma^2 & -\alpha^*\gamma & (\alpha^*)^2\end{pmatrix}
\otimes \begin{pmatrix} X \\ 1-(1+q^2)Z \\Y\end{pmatrix}.
\]
\end{defn}

\begin{remark}\label{rmk:Podlessphere}
(i) The map $\phi\colon\mathrm{Pol}(S_{q}^2)\to \mathrm{Pol}(SU_q(2))$ defined by
\[
X \mapsto -\ga^\ast\al \qquad Z \mapsto \ga^*\ga \qquad Y \mapsto  -\al^\ast\ga
\]
embeds $\mathrm{Pol}(S_{q}^2)$ as a left coideal in $\mathrm{Pol}(SU_q(2))$.
The embedding is equivariant: $(\io\otimes \phi)\Up = \De_+\phi$.\\
(ii) The operators on $\ell^2(\N)$ defined by
\begin{equation}\label{eq:reprPodles}
X\,e_n\, =\, -q^{n-1}\sqrt{1-q^{2n}}\, e_{n-1}, \quad
Z\, e_n\, =\, q^{2n}\, e_{n}, \quad
Y\, e_n\, =\, -q^{n}\sqrt{1-q^{2n+2}}\, e_{n+1}.
\end{equation}
represent $\mathrm{Pol}(S_{q}^2)$ faithfully. Set $U\colon e_{n,k} \mapsto e_{n,n+k}$,
so that $U\in B(\ell^2(\N)\otimes\ell^2(\Z))$ is unitary. Then \eqref{eq:reprPodles} and
\eqref{eq:reprSUq(2)} are related by $U$, so 
$U(x\otimes 1)U^\ast = \phi(x)$ for all $x\in \mathrm{Pol}(S_{q}^2)$.
\end{remark}


\section{Quantum linking groupoid}\label{sec:qlinkinggroupoid}

The quantum linking groupoid relates the quantum group analogues of $SU(2)$ and the double cover of  $E(2)$
as given in Definition-Propositions \ref{defprop:SUq2asvNbialg} and \ref{defprop:quantumE2}. In order to describe
the construction briefly we start with an explicit realisation of the Podle\'s sphere,
see \cite{DeCo-CMP}, \cite{DeCo-AiM} for more information.


\subsection{Implementation of the Podle\'s sphere and quantum linking groupoid}\label{ssec:implcorepPodlessphere}

We define
\[
N=B(\ell^2(\N),\ell^2(\Z))\bar\otimes \scL(\Z)\subset B(\ell^2(\N)\otimes\ell^2(\Z),\ell^2(\Z)\otimes\ell^2(\Z))
\]
and we equip it with the normal linear map
\begin{equation}\label{eq:defcomultN}
\De_{0+} \colon N\to N\bar\otimes N, \qquad
\De_{0+}(x) \, = \, W_0^\ast (1\otimes x) W_+ \quad x\in N,
\end{equation}
which is easily seen to be well-defined using the bicommutant theorem.

Then $\Delta_{0+}$ can be shown to be coassociative, and to turn $N$ into a `linking bimodule coalgebra' between  $\scL^{\infty}(\tilde{E}_q(2))$ and $\scL^{\infty}(SU_q(2))$. Together with the space of adjoint operators and the function algebras on the two quantum groups, it can be considered as a quantum groupoid, called `linking quantum groupoid', see \cite{DeCo-CMP}, \cite{DeCo-AiM}.

\begin{defprop}\label{prop:implementationUpbycG}\cite[\S 3]{DeCo-AiM} 
Define the unitary operator
\begin{gather*}
\cG \colon \ell^2(\N)\otimes \ell^2(\Z) \otimes \ell^2(\N) \to
\ell^2(\Z)\otimes \ell^2(\Z) \otimes \ell^2(\N) \\
\cG \colon \eta_{r,p,t} \mapsto e_{t-p,r,p}, \quad
\eta_{r,p,t} =  \underset{v-w=t}{\sum_{v,w\in \N}} P^+(p,v,w) e_{v,r+p-w,w},
\end{gather*} where $p\in \N$, $r,t\in \Z$.
Then $\cG\in N\bar{\otimes} B(\ell^2(\N))$ and $\cG$ implements
$\Up\colon \mathrm{Pol}(S_{q}^2)\to
\mathrm{Pol}(SU_q(2))\otimes \mathrm{Pol}(S_{q}^2)$, i.e.
\[
\Up(x) \, = \, \cG^\ast (1\otimes x)\cG, \qquad \forall\, x\in \mathrm{Pol}(S_{q}^2).
\]
Moreover, $(\De_{0+} \otimes \io)(\cG) = \cG_{13}\cG_{23}$.
\end{defprop}

The last formula of Proposition \ref{prop:implementationUpbycG} means 
that $\cG$ is to be seen as a unitary projective corepresentation of $\scL^{\infty}(SU_q(2))$. 
Together with the explicit implementation of the 
comultiplication $\De_{0+}$ it is the crucial formula for this paper. In Section \ref{sec:qErdelyiformula} this identity is
converted into a $q$-analogue of Erd\'elyi's formula. This formula follows from the general construction as described
in \cite[\S 2]{DeCo-AiM} introduced to study Morita equivalence of quantum groups. 

We can expand $\cG$ by
\begin{equation}\label{eq:expaGinGnm}
\cG = \sum_{n,m\in\N} \cG_{mn} \otimes e_{mn}, \qquad
\cG_{mn}\colon e_{a,b} \mapsto P^+(m,a,n) e_{a-n-m,b+n-m}
\end{equation}
where $e_{mn} \in B(\ell^2(\N))$ is the matrix-unit defined by $e_{mn}e_k = \de_{nk}e_m$.

Finally, we define
\begin{equation}\label{eq:defGlmn}
\cG^{(l)}_{mn} = v^l \cG_{mn} \in B(\ell^2(\N)\otimes \ell^2(\Z),\ell^2(\Z)\otimes \ell^2(\Z)),
\qquad \cG^{(l)} = \sum_{m,n\in\N} \cG_{mn}^{(l)}\otimes e_{mn}
\end{equation}
where $v$ is the shift operator \eqref{eq:defvinEq}. Then Proposition \ref{prop:implementationUpbycG} and
\eqref{eq:defDeltavinEq} imply
\begin{equation}\label{eq:DeoncGl}
 (\De_{0+} \otimes \io)(\cG^{(l)}) = \cG_{13}^{(l)}\cG_{23}^{(l)}
\end{equation}
Moreover, from \eqref{eq:expaGinGnm} and \eqref{eq:defvinEq} we find
\begin{equation}\label{eq:actionGlmn}
\cG^{(l)}_{mn}\colon \ell^2(\N) \otimes \ell^2(\Z) \to \ell^2(\Z) \otimes \ell^2(\Z), \quad
\cG^{(l)}_{mn}\colon e_{a,b} \mapsto P^+(m,a,n) e_{a-l-n-m,b+n-m}.
\end{equation}


\section{A $q$-analogue of Erd\'{e}lyi's formula}\label{sec:qErdelyiformula}

The goal of this section is to prove a first $q$-analogue of Erd\'elyi's formula
\eqref{eq:Erdelyi} on the lowest level of the $q$-scheme as discussed in
\cite[Fig.~1.2, \S 4]{KoelS}.

\begin{thm}\label{thm:qErdelyi}
Recall the notation of \eqref{eq:defWallpols}, \eqref{eq:def1phi1qBesselfunction} for Wall polynomials and little $q$-Bessel functions. Write $\tilde{p}_n(x;a;q) = (qa;q)_n \;p_n(x;a;q)$. 
Then we have
\begin{equation*}
\begin{split}
&\sum_{p=0}^\infty q^{2p} q^{p\nu} (q^{2+2p};q^2)_\infty \; \tilde{p}_n(q^{2p};q^{2\si};q^2)\; \tilde{p}_m(q^{2p};q^{2(\nu-\si)};q^2)  \; J_{\nu}(zq^{p};q^2) \\
& = (-q)^{n+m} z^{\nu} q^{(m-n)^2} q^{2n\si+2m(\nu-\si)}
(z^2q^{2(1+n+m)};q^2)_\infty \\ & \qquad \times \tilde{p}_n(z^2q^{2(n+m)};q^{2(\nu-\si+m-n)};q^2)\;\tilde{p}_m(z^2q^{2(n+m)};q^{2(\si+n-m)};q^2)
\end{split}
\end{equation*}
where $\Re \nu>-1$, $z\in \C$ with $|\arg z|<\pi$, $n,m\in \N$, $\si\in\C$.
\end{thm}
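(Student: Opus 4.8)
The plan is to read off the theorem as a single scalar matrix coefficient of the projective-corepresentation identity \eqref{eq:DeoncGl}, and then to lift the integrality restrictions on the parameters by analytic continuation. First I would turn \eqref{eq:DeoncGl} into an identity for the entries of $\cG^{(l)}$ on the representation leg $B(\ell^2(\N))$. Expanding $\cG^{(l)}=\sum_{m,n}\cG^{(l)}_{mn}\otimes e_{mn}$ as in \eqref{eq:defGlmn} and using $e_{m'n'}e_{mn}=\de_{n'm}e_{m'n}$ to multiply out $\cG^{(l)}_{13}\cG^{(l)}_{23}$, the comparison of the coefficients of $e_{jk}$ on the third leg yields
\[
\De_{0+}(\cG^{(l)}_{jk})=\sum_{m\in\N}\cG^{(l)}_{jm}\otimes\cG^{(l)}_{mk},\qquad j,k\in\N.
\]

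Next I would evaluate both sides on a standard basis vector $e_{a,b,a',b'}$ of $(\ell^2(\N)\otimes\ell^2(\Z))^{\otimes 2}$ and extract the coefficient of a fixed output vector. On the right, \eqref{eq:actionGlmn} gives at once $\sum_m P^+(j,a,m)P^+(m,a',k)$ with all output indices affine in $m$. On the left I would insert $\De_{0+}(x)=W_0^\ast(1\otimes x)W_+$ and apply, in turn, the explicit action of $W_+$ from Remark \ref{rmk1:defSUq2asvNbialg}(iv), the action of $\cG^{(l)}_{jk}$ from \eqref{eq:actionGlmn}, and the action of $W_0^\ast$, which one computes from the basis $\xi^0$ of Definition-Proposition \ref{defprop:quantumE2} exactly as $W_+$ is computed in Remark \ref{rmk1:defSUq2asvNbialg}(iv). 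Matching the four output indices determines every summation variable except one internal index $P\in\N$, leaving the constraint $v-w=a-a'+k-j$; equating the two coefficients gives the scalar identity
\[
\sum_{P\in\N}P^+(P,a,a')\,P^+(j,P,k)\,P^0(P-l-k-j,v,w)=P^+(j,a,m)P^+(m,a',k),
\]
with $m=a-l-j-v$ and $w=v-(a-a'+k-j)$. I would then insert the closed forms of $P^+$ and $P^0$: the two $P^+$-factors produce a product of two Wall polynomials once the $(-q)^{-p}$-symmetry of Remark \ref{rmk1:defSUq2asvNbialg}(ii) is used to bring both to the common argument $q^{2P}$ (whereupon their square-root prefactors telescope to a $P$-independent constant), while $P^0$ supplies a ${}_1\vp_1$ $q$-Bessel function of order $v-w$ and argument a fixed power of $q$ times $q^{P}$; the right-hand side likewise collapses to two Wall polynomials. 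Introducing $\tilde p_n=(qa;q)_np_n$ absorbs the remaining $q$-shifted factorials, and under the dictionary $\nu=v-w$, with the degrees $a',k$ matched to $n,m$ and the parameters $q^{2(a-a')},q^{2(j-k)}$ matched to $q^{2\si},q^{2(\nu-\si)}$, and $z$ the power of $q$ fixed by $l$ and the boundary indices, this is exactly the asserted identity for $z\in q^\Z$ and integer $\nu,\si$.

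Finally I would remove the integrality by analytic continuation. For fixed $n,m\in\N$ both sides, divided by $z^\nu$, are entire in $z^2$, since $J_\nu(\cdot;q^2)$ and the factors $(z^2q^{\,\cdots};q^2)_\infty$ are entire and each $\tilde p_n$ is polynomial in its argument; as the identity holds on $z^2\in q^{2\Z}$, which accumulates at $0$, it extends to all $z$ with $|\arg z|<\pi$. Since $\tilde p_n(x;a;q)$ is a polynomial in $a$, both sides are entire in $\si$, and the remark of \S\ref{ssec:BHS} that $(b_1;q)_\infty\,{}_1\vp_1$ is analytic in $b_1$ makes them analytic in $\nu$ for $\Re\nu>-1$; agreement at the integer values then forces agreement on the whole stated domain. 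I expect the main obstacle to be the middle step: carrying all the accumulated powers of $q$ and the $q$-shifted factorials through the two $W$-operators and the two $\cG^{(l)}$-factors, and in particular checking that the symmetrisation of the $P^+$-factors genuinely collapses every square root and aligns both Wall polynomials to the single argument $q^{2P}$, so that the representation-theoretic identity acquires the clean symmetric form stated in the theorem.
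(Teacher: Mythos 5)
Your proposal is correct and follows essentially the same route as the paper's proof: both extract a scalar matrix coefficient from the corepresentation identity \eqref{eq:DeoncGl} via the explicit actions of $W_+$, $W_0$ and $\cG^{(l)}$, reduce it to the identity $\sum_{p} P^+(p,a,c)P^+(y,p,e)P^0(\cdot,\cdot,\cdot)=P^+(\cdot,c,e)P^+(y,a,\cdot)$, symmetrise the $P^+$-factors, and then continue analytically in $z$, $\nu$ and $q^{2\si}$. The only cosmetic difference is that you first isolate the component identity $\De_{0+}(\cG^{(l)}_{jk})=\sum_m\cG^{(l)}_{jm}\otimes\cG^{(l)}_{mk}$ before evaluating on basis vectors, whereas the paper works with the full operator identity \eqref{eq:qErdelyioperator} and pairs against $e_{u,v,w,x,y}$ at the end.
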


\begin{remark}\label{rmk:thmqErdelyi}
(i) Write $E_{q^2}(z) = (-z;q^2)_\infty$ for the big $q$-exponential function \cite[\S 1.3]{GaspR}, and write $\check{p}_n(x;a;q) = (qa;q)_{\infty}p_n(x;a;q)$.
 Then using the notation \eqref{eq:defJacksonqintegral}, we can rewrite the result of Theorem \ref{thm:qErdelyi} as
\begin{equation}\label{eq:qErdelyiqintversion}
\begin{split} 
&\frac{1}{1-q}\int_{0}^\infty x^{\nu} E_{q^2}(-q^2x^2)\; \check{p}_n(x^2;q^{2\si};q^2)\check{p}_m(x^2;q^{2(\nu-\si)};q^2) J_{\nu}(zx;q^2)\,xd_qx  \\
& =  (-q)^{n+m} z^{\nu} q^{(m-n)^2} q^{2n\si+2m(\nu-\si)}
E_{q^2}(-z^2q^{2(1+n+m)}) \\ & \qquad \times \check{p}_n(z^2q^{2(n+m)};q^{2(\nu-\si+m-n)};q^2)\check{p}_m(z^2q^{2(n+m)};q^{2(\si+n-m)};q^2)
\end{split}
\end{equation}
Replacing $x$, $z$ by $x\sqrt{1-q}$, $z\sqrt{1-q}$ and using the $q$-Gamma function, we see that we can take the formal limit $q\uparrow 1$ of \eqref{eq:qErdelyiqintversion} to \eqref{eq:Erdelyi}.
\par\noindent
(ii) The special case $n=m=0$ is
\begin{equation}
\sum_{p=0}^\infty q^{2p} q^{p\nu} (q^{2+2p};q^2)_\infty  J_{\nu}(zq^{p};q^2)
 =  z^{\nu}
(z^2q^{2};q^2)_\infty
\end{equation}
which is directly proved using the $q$-binomial formula and the big $q$-exponential function,
see \cite[\S 1.3]{GaspR}, after using \eqref{eq:def1phi1qBesselfunction} and interchanging summations.
In the same spirit we can evaluate the left hand side of Theorem \ref{thm:qErdelyi} in the case $m=0$, which gives a series expansion in $z$ (up to $z^\nu$). Expanding the right hand side of Theorem
\ref{thm:qErdelyi} in the same way, we see that we need the $q$-Chu-Vandermonde summation
\cite[(1.5.3)]{GaspR} to establish equality.
The case for arbitrary degrees $m$ can be obtained by induction with respect to $m$, and using the three-term recurrence relation for the Wall polynomials both in the degree $n$ as in $m$.
Note that this is a verificational proof, and completely different in spirit from e.g. Rahman's \cite{Rahm} analytic
proof of Koornwinder's \cite{Koor-SIAM1991} addition formula for the little $q$-Legendre polynomials.
It would be desirable to have an analytic proof of Theorem \ref{thm:qErdelyi}
in the style of Rahman \cite{Rahm}. 
\\
\noindent
(iii) We need the additional freedom of $l$ introduced in \eqref{eq:defGlmn}
in order to get the result of Theorem \ref{thm:qErdelyi} in full generality. \\
\noindent
(iv) Theorem \ref{thm:qErdelyi} can be viewed as $q$-Hankel transform. The inverse, see \cite[(3.4)]{KoorS}, can be written down, and the resulting inverse identity is equivalent to Theorem \ref{thm:qErdelyi} for $z=q^r$, $r\in\Z$.
\end{remark}

\begin{proof}[Proof of Theorem \ref{thm:qErdelyi}.]
We start with the corepresentation property of $\cG^{(l)}$, see \eqref{eq:DeoncGl}, in combination with the implementation of the comultiplication as in Proposition \ref{prop:implementationUpbycG} to find the operator identity
\begin{equation}\label{eq:qErdelyioperator}
W_{0,12}^\ast \cG^{(l)}_{23} W_{+,12} = \cG^{(l)}_{13} \cG^{(l)}_{23}
\colon \bigl( \ell^2(\N)\otimes\ell^2(\Z)\bigr)^{\otimes 2} \otimes \ell^2(\N)
\to \bigl( \ell^2(\Z)\bigr)^{\otimes 4}\otimes \ell^2(\N)
\end{equation}
Since \eqref{eq:qErdelyioperator} is an identity for operators, we let \eqref{eq:qErdelyioperator}
act on the basis vector
$e_{a,b,c,d,e}$, $a,c,e\in \N$, $b,d\in \Z$.
The action of the left hand side of \eqref{eq:qErdelyioperator} on this basis vector can be calculated 
using Remark \ref{rmk1:defSUq2asvNbialg}(iv), \eqref{eq:actionGlmn} and Definition-Proposition \ref{defprop:quantumE2}, and we find
\begin{equation}\label{eq:qErdelyioperatorLHSonbasis}
\sum_{p,m\in\N} P^+(p,a,c) P^+(m,p,e) \xi^0_{b+c-p,d-a+p,p-l-m-e,a-c+e-m} \otimes e_m,
\end{equation}
with $\xi^0_{rspt}$ as in Definition-Propostion \ref{defprop:quantumE2}.
The action of the right hand side of \eqref{eq:qErdelyioperator} on this basis vector can be calculated using \eqref{eq:actionGlmn} and we find
\begin{equation}\label{eq:qErdelyioperatorRHSonbasis}
\sum_{m,r\in\N} P^+(m,c,e) P^+(r,a,m)
e_{a-l-r-m,b-r+m,c-l-e-m,d+e-m,r}.
\end{equation}

Since \eqref{eq:qErdelyioperator} leads to the equality of \eqref{eq:qErdelyioperatorLHSonbasis} and \eqref{eq:qErdelyioperatorRHSonbasis} as identity in $\ell^2(\Z)^{\otimes 4}\otimes \ell^2(\N)$, we can take
inner products with an arbitrary basisvector $e_{u,v,w,x,y} \in
\ell^2(\Z)^{\otimes 4}\otimes \ell^2(\N)$ to obtain a scalar identity.
A calculation shows that the inner product with \eqref{eq:qErdelyioperatorRHSonbasis} gives
\begin{equation*}
P^+(d+e-x,c,e) P^+(y,a,d+e-x)
\end{equation*}
in case
\begin{equation}\label{eq:qErdelyiconditionsRHS}
d+e-x=c-l-e-w=-b+y+v=a-l-y-u\in \N
\end{equation}
and $0$ otherwise. A calculation shows that the inner product with \eqref{eq:qErdelyioperatorLHSonbasis} gives
\begin{gather*}
\sum_{p=0}^\infty P^+(p,a,c) P^+(y,p,e) P^0(p-l-y-e, a-c+e-y+w,w).
\end{gather*}
in case $u-w=a-c+e-y$, $v+w=b+c-l-y-e$, and $x-u=d+y+e-a+l$ and $0$ otherwise. Assuming these conditions, which match the equalities in \eqref{eq:qErdelyiconditionsRHS}, and eliminating superfluous parameters we obtain
\begin{equation}\label{eq:qErdroughform1}
\begin{split}
&\sum_{p=0}^\infty P^+(p,a,c) P^+(y,p,e) P^0(p-l-y-e, a-c+e-y+w,w)  \\
& = \begin{cases} P^+(c-l-e-w,c,e) P^+(y,a,c-l-e-w), & c-l-e-w \in \N \\
0, & c-l-e-w \in \Z_{<0}
\end{cases}
\end{split}
\end{equation}
Using the symmetry in Remark \ref{rmk1:defSUq2asvNbialg}(ii), we can write
$P^+(y,p,e)=(-q)^{y-p} P^+(p,e,y)$,
$P^+(y,a,c-l-e-w)=(-q)^{l+e+w+y-c}P^+(c-l-e-w,a,y)$ and $P^+(c-l-e-w,c,e) = P^+(c-l-e-w,e,c)$. 
Next use the explicit expressions of $P^+$ and $P^0$ in terms of Wall polynomials and $q$-Bessel
functions to find 
\begin{equation*}
\begin{split}
&\sum_{p=0}^\infty q^{2p} q^{p(a-c+e-y)} (q^{2+2p};q^2)_\infty p_c(q^{2p};q^{2a-2c};q^2) p_y(q^{2p};q^{2e-2y};q^2) J_{a-c+e-y}(q^{p-l-y-e-w};q^2) \\
& = C (-q)^{y+c} (q^{2+2(c-l-e-w)};q^2)_\infty p_c(q^{2(c-l-e-w)};q^{2e-2c};q^2) p_y(q^{2(c-l-e-w)};q^{2a-2y};q^2), \end{split}
\end{equation*}
where \[
C = q^{c(a-c)} q^{y(e-y)} q^{(-l-e-w)(e-c)} q^{(c-l-e-w-y)(a-y)}
\frac{(q^{2+2e-2c}, q^{2+2a-2y};q^2)_\infty}{(q^{2+2a-2c},q^{2+2e-2y};q^2)_\infty}.
\]
Replacing $(c,a-c,y,e-y,-l-y-e-w)$ by $(n,\si,m,\nu-\si,z')$ we obtain the result for
$z=q^{z'}$, $z'\in\Z$, $\nu,\si\in\Z$ after a straightforward calculation.

Replacing $q^{z'}$ by $z$, and removing $z^\nu$ from both sides, we see that we can extend by analytic continuation in $z$. Similarly, we can extend analytically in $\nu$ to $\Re \nu>-1$.
Since the resulting identity is a Laurent polynomial in $q^{2\si}$ and the convergence is uniform in $q^{2\si}$ on bounded sets we find that the resulting identity is valid for arbitrary $\si\in\C$.
\end{proof}

\medskip
\textbf{Acknowledgement.} We thank the referees for useful comments, which have improved the readability of the paper. 


\end{document}